\newtheorem{theo}{Theorem}[section]
\newtheorem{defi}[theo]{Definition}       
\newtheorem{conj}[theo]{Conjecture}
\newtheorem{prop}[theo]{Proposition}
\newtheorem{lem}[theo]{Lemma}
\newcommand{\Rset}{\mathbb{R}}
\newcommand{\Zset}{\mathbb{Z}}
\newtheorem{thm}{Theorem}
\newcommand{\bthm}{\begin{thm}}
\newcommand{\ethm}{\end{thm}}
\newcommand{\bpr}{\begin{prop}}
\newcommand{\epr}{\end{prop}}
\newtheorem{defn}[theo]{Definition}
\newcommand{\bdf}{\begin{defn}}
\newcommand{\edf}{\end{defn}}
\newcommand{\blm}{\begin{lem}}
\newcommand{\elm}{\end{lem}}
\newtheorem{cor}[theo]{Corollary}
\newcommand{\bcr}{\begin{cor}}
\newcommand{\ecr}{\end{cor}}
\newcommand{\bcj}{\begin{conj}}
\newcommand{\ecj}{\end{conj}}
\newtheorem{rmk}[theo]{Remark}
\newcommand{\brk}{\begin{rmk}}
\newcommand{\erk}{\end{rmk}}
\newtheorem{ex}[theo]{Example}
\newcommand{\bex}{\begin{ex}}
\newcommand{\eex}{\end{ex}}
\newtheorem{genhyp}[theo]{General Hypothesis}
\newcommand{\tb}{Thurston-Bennequin }
\newcommand{\tbrom}{\rm tb}
\newcommand{\T}{T^3}
\newcommand{\rrr}{{\mathbb R}^3 }
\newcommand{\R}{\mathbb{R}}
\newcommand{\ta}{\widetilde\alpha}
\newcommand{\al}{\alpha}
\newcommand{\ha}{\hat\alpha}
\newcommand{\sm}{\smallsetminus}
\newcommand{\Int}{{\rm Int}}
\journal{Topology \& its Applications}
\begin{document}

\begin{frontmatter}




\title{Classical invariants of Legendrian knots in the 3-dimensional torus}




\author{PAUL A. SCHWEITZER, S.J.}

\address{Departamento de Matem\'atica\\ Pontif\'icia Universidade Cat\'olica do Rio de Janeiro, PUC-Rio, Brazil}

\author{F\'ABIO S. SOUZA}

\address{Faculdade de Forma\c{c}\~ao de Professores\\ Universidade do Estado do Rio de Janeiro, UERJ, Brazil}


\begin{abstract}

All knots in $\Rset^3$ possess Seifert surfaces, and so
the classical Thurs\-ton-Bennequin and rotation (or Maslov)
invariants for Legendrian knots in a contact
structure on $\Rset^3$ can be defined.
The definitions extend easily to null-homologous knots in any $3$-manifold
$M$ endowed with a contact structure $\xi$. We generalize the definition of Seifert surfaces and use them to define these invariants for all Legendrian knots, including those that are not null-homologous, in a contact structure on the $3$-torus $T^3$.
We show how to compute the Thurston-Bennequin and rotation
invariants in a tight oriented contact structure on $T^3$ using projections.

\end{abstract}


\begin{keyword}


Legendrian knots \sep Thurston-Bennequin invariant \sep
Maslov invariant\sep contact structures, 3-torus $T^3$ \sep Seifert
surfaces.


\MSC 57R17 \sep 57M27



\end{keyword}


\end{frontmatter}




\section{Introduction and Statement of Results} \label{introduction}

Let $\xi$ be an {\it oriented contact structure} on a smooth oriented
$3$-manifold $M$,
i.e., a $2$-plane field that locally is the kernel of a totally
non-integrable $1$-form $\omega$, so that locally
$\xi=\ker(\omega)$ with the induced orientation and
$\omega\wedge d\omega$ is non-vanishing. A {\it knot} in a smooth
$3$-manifold $M$ is a smooth embedding $\alpha: S^1\to M$.
A knot in the contact manifold $(M,\xi)$ is {\it Legendrian} if $\alpha$ is everywhere
tangent to $\xi$. Two Legendrian knots $\alpha_0$ and $\alpha_1$
are {\it Legendrian homotopic} if there is a smooth $1$-parameter family
of Legendrian knots $\alpha_t$, $t\in [0,1],$ that connects them.

The Thurston-Bennequin and rotation (or Maslov) numbers are well-known classical invariants of null-homologous oriented Legendrian knots
in $(M^3,\xi)$ that depend only on their Legendrian homotopy class
and, for the rotation invariant, on a fixed Legendrian vector
field \cite{Be, Et}. Given
a Seifert surface $\Sigma$ for the Legendrian knot $\alpha$
in $(M^3,\xi)$, the {\it Thurston-Bennequin invariant} $\tbrom(\alpha)$ is
defined to be the number of times the contact plane $\xi$ rotates relative to the tangent plane to $\Sigma$ in one circuit of $\al$.
The {\it rotation invariant} $r(\alpha)$ is the number of times
the tangent vector $\alpha'$ rotates in $\xi$ relative
to a fixed Legendrian vector field $Z$ in a single circuit of $\al$.
For the standard contact structure $\xi_{std}=\ker(dz-ydx)$ on $\Rset^3$, both invariants of a generic Legendrian knot
can be calculated using the front
and Lagrangian projections of $\Rset^3$ to $\Rset^2$ (see Sections \ref{tbproj} and \ref{maslovproj}).

On the $3$-dimensional
torus $T^3$, we define generalized Seifert surfaces
for knots that are not null-homologous (Definition
\ref{defseifsurf}) and use them to extend the definition of the Thurston-Bennequin invariant to all Legendrian knots in an arbitrary contact structure $\xi$ on $T^3$.
Let $\alpha: S^1=\mathbb{R}/\mathbb{Z}\to T^3$ be a knot in
$T^3$ and let $\widetilde{\alpha}:\mathbb{R}\to\mathbb{R}^3$ be a
lift of $\alpha$ to the universal cover $\widetilde{T^3} =
\mathbb{R}^3$, where $T^3$ is identified with
$\mathbb{R}^3/\mathbb{Z}^3$. If a component of $\widetilde\alpha$ is
compact, then it is a knot on ${\mathbb R^3}$ and
the usual definition of Seifert surfaces applies.
Hence we usually assume the following General Hypothesis,
and then prove the following Proposition.

\begin{genhyp}
\it Each component of $\ta$ is assumed to be non-compact. (Equivalently,
$\alpha$ is not contractible in $T^3$.)
\label{genhyp} \end{genhyp}

Note that the components of $\ta$ are homeomorphic to each other, so if one component is non-compact, then all of them are.
In this case $\ta$ will be periodic, say with smallest period $(p,q,r)\in \Zset^3$.
We define a connected oriented surface $\Sigma \subset
\rrr$ to be a {\it covering Seifert surface} for such a knot $\alpha$ in $T^3$
if $\Sigma$ is $(p,q,r)$-periodic,
$\partial\Sigma$ is one component of $\ta$, and outside a
tubular neighborhood of that component, $\Sigma$ coincides
with an affine half-plane in $\rrr$. Then we call
$\hat\Sigma = \Sigma/\Zset(p,q,r)$ a (generalized) {\it Seifert surface}
for $\al$ (See Definition \ref{defseifsurf}). The arguments using
covering Seifert surfaces $\Sigma\subset \Rset^3$ and Seifert surfaces $\hat\Sigma\subset \Rset^3/\Zset(p,q,r)$
are parallel and equivalent, so some attention is needed to distinguish
the two types of Seifert surfaces.

\begin{prop}
Let $\al$ be a smooth knot on $T^3$ whose lift $\ta$ has all components non-compact and let $\hat\al = \ta/\Zset(p,q,r)$. Then
\begin{enumerate}

\item There is a covering Seifert surface $\Sigma$ with corresponding (generalized) Seifert surface $\hat\Sigma$ for the knot $\al$;

\item If $\Sigma_1$ and $\Sigma_2$ are both covering Seifert surfaces for
the knot $\al$, then the relative rotation number
$\rho(\Sigma_1,\Sigma_2)$, defined to be the number of times
$X_2$ rotates relative to $X_1$ in the normal plane
to $\ta$ in one circuit
of $\al$, where $X_i\ (i=1,2)$ is a unit vector field tangent to $\Sigma_i$
and orthogonal to $\ta'$, is zero.
\end{enumerate}\label{2Seif}
\end{prop}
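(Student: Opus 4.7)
The plan is to work in the universal cover $\rrr$, passing to the quotient $\hat M = \rrr/\Zset(p,q,r)$ when topological arguments are needed. For (1), after translating so that $\ta_0(0)=0$, let $L = \Rset(p,q,r)$. Since $\ta_0(s)-s(p,q,r)$ is $1$-periodic in $s$ and hence bounded, $L$ lies within bounded distance of $\ta_0$, so one may choose a $(p,q,r)$-invariant tubular neighborhood $T$ of $\ta_0$ which contains $L$ and is disjoint from the other components of $\ta$. Fix any affine half-plane $H$ with $\partial H = L$. In the solid torus $\hat T = T/\Zset(p,q,r)$ the curves $\ha_0$ (the core) and $\hat H \cap \partial\hat T$ both represent the generator of $H_1(\hat T)\cong\Zset$, hence they cobound an embedded annulus $\hat A$; one may take the trace of an ambient isotopy between the two longitudes in $\hat T$. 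Arranging $\hat A$ to coincide with $\hat H\cap \hat T$ near $\partial\hat T$, lifting to $A\subset T$, and setting $\Sigma = A\cup(H\setminus T)$ produces a smooth connected oriented $(p,q,r)$-periodic surface with boundary $\ta_0$ which agrees with $H$ outside $T$; then $\hat\Sigma = \Sigma/\Zset(p,q,r)$ is the required generalized Seifert surface.

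For (2), the plan is to deform $\Sigma_1$ to $\Sigma_2$ through covering Seifert surfaces and invoke the integrality of the relative winding. First reduce to the case $H_1=H_2$: the space of affine half-planes with boundary line parallel to $(p,q,r)$ is path-connected, so there is a continuous family $H_t$ joining $H_1$ to $H_2$ through such half-planes, and a corresponding deformation of $\Sigma_1$ to a covering Seifert surface $\Sigma_1'$ with outer half-plane $H_2$ can be made supported outside a common tubular neighborhood $T$ of $\ta_0$ containing every intermediate boundary line $L_t$. Since this deformation is trivial along $\ta_0$, $X_1$ is unchanged and $\rho(\Sigma_1,\Sigma_2)=\rho(\Sigma_1',\Sigma_2)$. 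Inside $\hat T$, the pieces $A_1'=\Sigma_1'\cap T$ and $A_2=\Sigma_2\cap T$ are embedded annuli sharing the boundary $\ha_0 \cup (\hat H_2\cap\partial\hat T)$. The central technical step is that such annuli in the solid torus are isotopic rel boundary: identifying $\hat T \cong S^1\times D^2$ so that the prescribed outer boundary is $S^1\times\{p_0\}$ for some $p_0\in\partial D^2$, and writing each annulus as $(\theta,t)\mapsto(\theta,q_i(\theta,t))$ with $q_i(\theta,0)=0$ and $q_i(\theta,1)=p_0$, the paths $t\mapsto q_i(\theta,t)$ in $D^2$ are homotopic rel endpoints by simple-connectivity of $D^2$; a $\theta$-parametrized version of this homotopy combined with the isotopy extension theorem in $\hat T$ yields an ambient isotopy rel $\partial\hat T\cup\ha_0$. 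After arranging the annuli to coincide in a collar of $\partial\hat T$, extending by the identity outside $T$ gives a one-parameter family $\Sigma_s$ of covering Seifert surfaces joining $\Sigma_1'$ to $\Sigma_2$.

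Along this family the inward unit tangent $X_s$ varies continuously as a unit section of the trivial rank-two normal bundle of $\ta_0$ in $\rrr$. Because each $X_s$ is $(p,q,r)$-periodic, its winding over one period relative to any fixed trivialization is an integer; continuity in $s$ forces this integer to be constant, so $X_0$ and $X_1$ have equal winding and $\rho(\Sigma_1',\Sigma_2)=0$, whence $\rho(\Sigma_1,\Sigma_2)=0$. The hardest point is the annulus uniqueness lemma: the analogous statement in $T^2\times I$ with both boundary circles fixed on the two boundary tori is false, and the uniqueness in $\hat T$ depends essentially on one boundary being the core, which allows any meridional twist to be absorbed by a $\theta$-dependent rotation of $D^2$ about the origin.
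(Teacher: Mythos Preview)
Your construction in (1) differs from the paper's: rather than adapting the Seifert algorithm periodically (projecting $\ta$ to a $(p,q,r)$-periodic plane, resolving crossings, capping Seifert circles with disks and periodic strips, and attaching the one leftover periodic curve to a half-plane), you build an annular covering Seifert surface directly by connecting the core $\ha_0$ to a longitude on the boundary of a solid-torus neighborhood. This is cleaner for bare existence, though it only yields genus-zero surfaces and glosses over some transversality details (for instance, that $\hat H\cap\partial\hat T$ is a single curve).

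For (2) there is a real gap. You assert that $\Sigma_1'\cap T$ and $\Sigma_2\cap T$ descend to embedded \emph{annuli} in $\hat T$, but an arbitrary covering Seifert surface is only required to coincide with a half-plane outside a large tube; inside, $\hat\Sigma_i\cap\hat T$ may have any genus --- indeed the paper's Seifert-algorithm construction generally produces positive genus. When the genera of $\hat\Sigma_1'\cap\hat T$ and $\hat\Sigma_2\cap\hat T$ differ there is no isotopy between them at all, and your continuity argument for the winding number cannot start. A secondary issue: even in the annular case, writing each annulus as a graph $(\theta,t)\mapsto(\theta,q_i(\theta,t))$ presupposes that $A_i$ meets every disk fiber $\{\theta\}\times D^2$ in a single arc, which requires an innermost-disk argument you do not supply.

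The paper's proof of (2) sidesteps isotopy entirely. It glues $\Sigma_1$ and $\Sigma_2$ with opposite orientations along their half-plane ends to obtain a compact immersed surface in $\hat T^3=\Rset^3/\Zset(p,q,r)$ whose boundary is two oppositely oriented copies of $\ha$, and then runs Mayer--Vietoris on $\hat T^3=V\cup M$ with $V$ a tubular neighborhood of $\ha$: since $H_2(\hat T^3)=0$, the map $H_1(\partial V)\to H_1(V)\oplus H_1(M)$ is injective, while the difference of the pushoffs of $\ha$ along $X_1$ and $X_2$ dies in both summands, hence vanishes on $\partial V$, giving $\rho=0$. Your reduction to a common half-plane can be completed by the same homological idea without any isotopy: once $H_1=H_2$, the (possibly higher-genus) surface $\hat\Sigma_i\cap\hat T$ already exhibits the pushoff of $\ha_0$ along $X_i$ as homologous, in $\hat T\setminus\ha_0\cong T^2\times(0,1]$, to the common outer curve $\hat H_2\cap\partial\hat T$; hence the two pushoffs agree in $H_1$ of the boundary of a small tube about $\ha_0$, and $\rho(\Sigma_1,\Sigma_2)=0$.
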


Using these Seifert surfaces, we extend the definition of the \tb invariant to $(T^3, \xi)$, as follows.
The Thurston-Bennequin invariant $\tbrom(\alpha)$ for a Legendrian knot $\alpha$ in $(T^3, \xi)$ is defined to be the
rotation number of the contact plane $\xi$ with respect to a (generalized)
Seifert surface $\hat\Sigma$ for $\alpha$ in one circuit of $\alpha$ (See
Definition \ref{def:tb}).

\bthm
\begin{enumerate}
\item If $\alpha$ is a Legendrian knot in $(T^3,\xi)$ satisfying the
General Hypothesis \ref{genhyp},
then the Thurston-Bennequin invariant  $\tbrom(\alpha)$ is well-defined.
\item If $\alpha$ satisfies \ref{genhyp} but is null-homologous, then our
definition of $\tbrom(\alpha)$ agrees with the standard definition
of $\tbrom(\alpha)$ in $T^3$.
\item If $\alpha$ does not satisfy \ref{genhyp}, so that $\alpha$
is contractible, then the standard definitions of $\tbrom(\alpha)$ and $\tbrom(\tilde\alpha)$ coincide,
where $\tilde\alpha$ and $\tilde\xi$ are the lifts of $\alpha$
and $\xi$ to the universal cover $(\tilde T^3,\tilde\xi)$
of $(T^3,\xi)$.
\end{enumerate} \label{thm1}
\ethm

\noindent This shows that Definition \ref{def:tb}
extends the usual definition in $\rrr$.
Recall that Kanda \cite{Ka} defines a Thurston-Bennequin invariant
for {\it quasilinear Legendrian knots} on $T^3$, i.e., those that are
isotopic to knots with constant slope, using an
incompressible torus containing the knot to replace the Seifert
surface. In the universal cover, the torus lifts to a plane,
half of which is isotopic to our covering Seifert surface,
so the following result holds.

\bpr For quasilinear knots in $T^3$, our definition of $\tbrom(\alpha)$
agrees with the definition of Kanda \cite{Ka}. \epr

We shall be especially interested in the tight contact structures
$$\xi_n=\ker(\cos(2\pi nz)dx + \sin(2\pi nz)dy)$$
on $T^3=\Rset^3/\Zset^3$, where $n$ is a positive integer. Kanda
\cite{Ka} has shown that for every tight contact structure $\xi$ on $T^3$ there is a contactomorphism  (i.e., a diffeomorphism that preserves the contact structure) from $\xi$ to $\xi_n$, for some $n>0$.

Define projections
$p_{xy}, p_{xz}: T^3\to T^2$ by setting
$p_{xy}(x,y,z)=(x,y)$ and $p_{xz}(x,y,z)=(x,z)$,
where $x,y,z$ are the coordinates modulo $1$ in $T^3$
and in $T^2$.
The projection $p_{xy}$ is called the {\it front projection}, for if we identify
$T^3$ with the space of co-oriented contact elements on $T^2$,
then the wave fronts of the propagation of a wave on $T^2$
are images under $p_{xy}$ of Legendrian curves in $T^3$.
Then a knot in $T^3$ is {\it generic} relative to
both projections $p_{xy}$ and $p_{xz}$
if its curvature vanishes only at isolated points and
the only singularities of the projected knot are transverse
double points and cusps.
Note that every Legendrian
knot can be made generic by an arbitrarily small Legendrian homotopy.
The following Theorem shows how to calculate both invariants of generic
Legendrian knots for $\xi_n$ using the projections
$p_{xy}, p_{xz}: T^3\to T^2$.

\bthm Let $\alpha$ be a generic oriented Legendrian knot in $(T^3,\xi_n)$.
\begin{enumerate}
\item For the projection $p_{xy}$ of $\alpha$,
$\tbrom(\alpha)= P - N + C/2$,
where $P$ and $N$ are the numbers of positive and negative
crossings and $C$ is the number of cusps for
$p_{xy}\circ\alpha$ in one circuit of $\alpha$;
\item For the projection $p_{xz}$ of $\alpha$,
$\tbrom(\alpha)= P - N$,
where $P$ and $N$ are the numbers of positive and negative
crossings for $p_{xz}\circ\alpha$ in one circuit of $\alpha$, and there are no cusps;
\item For the projection $p_{xy}$ of $\alpha$,
the rotation invariant
relative to the Legendrian vector field $Z=\partial/\partial z$ is
$r(\alpha)= 1/2(C_+ - C_-)$,
where $C_+$ and $C_-$ are the numbers of positive and negative
cusps of $p_{xy}\circ\alpha$ in one circuit of $\alpha$;
\item
Let $V=\{t\in S^1\ |\ (x'(t),y'(t))=(0,0) \}$
and suppose that
$2nz(t)\notin {\mathbb Z}$ for every $t\in V$.
Then for the projection $p_{xz}$ of $\alpha$, the rotation invariant
relative to $Z=\partial/\partial z$ is
$r(\alpha)= 1/2\sum_{t\in V} a(t)b(t)$,
where $a(t)=(-1)^{[2nz(t)]}$ and
$b(t)=\pm 1$ according to whether $p_{xz}\circ\alpha'(t)$
is turning in the positive or negative direction in the
$xz$-plane.
\end{enumerate} \label{projectionthm}\ethm

Generalized Seifert surfaces for knots in $T^3$ will be defined
and studied in \S \ref{SeifSurf}. The Thurston-Bennequin invariant
${\tbrom}(\alpha)$ and the rotation invariant $r(\alpha)$
for a generic oriented Legendrian knot in $T^3$ will be treated in \S \ref{tbinv} and \S \ref{s:maslov}, respectively. The proofs of the
four assertions of Theorem \ref{projectionthm} are given in the proofs
of the Propositions \ref{tbpxz}, \ref{tbpxy},
\ref{maslovpxz}, and \ref{maslovpxy}, respectively,
in the Subsections \ref{tbproj} and \ref{maslovproj}.
In the last Section \ref{s:tb-inequality} we calculate the invariants
for quasilinear
Legendrian knots in $(T^3,\xi_n)$ and observe that the Bennequin
inequality for null-homologous Legendrian knots in a tight
contact structure has to be modified in this case. Finally we make a conjecture about the extension of the Bennequin inequality for tight
contact structures on $T^3$.

This paper is a continuation of the work of the second author in his masters thesis \cite{So} at the Pontif\'\i cia Universidade
Cat\'olica of Rio de Janeiro under the direction of the first author.


\section{Seifert surfaces in $T^3$} \label{SeifSurf}

In this section we consider smooth knots in $T^3$ and their Seifert
surfaces, without reference to any contact structure, as a
preparation for studying Legendrian knots and their
Thurston-Bennequin and rotation invariants in the next two sections.


\begin{figure}[H]
\centering     \includegraphics*[width=.81\linewidth]{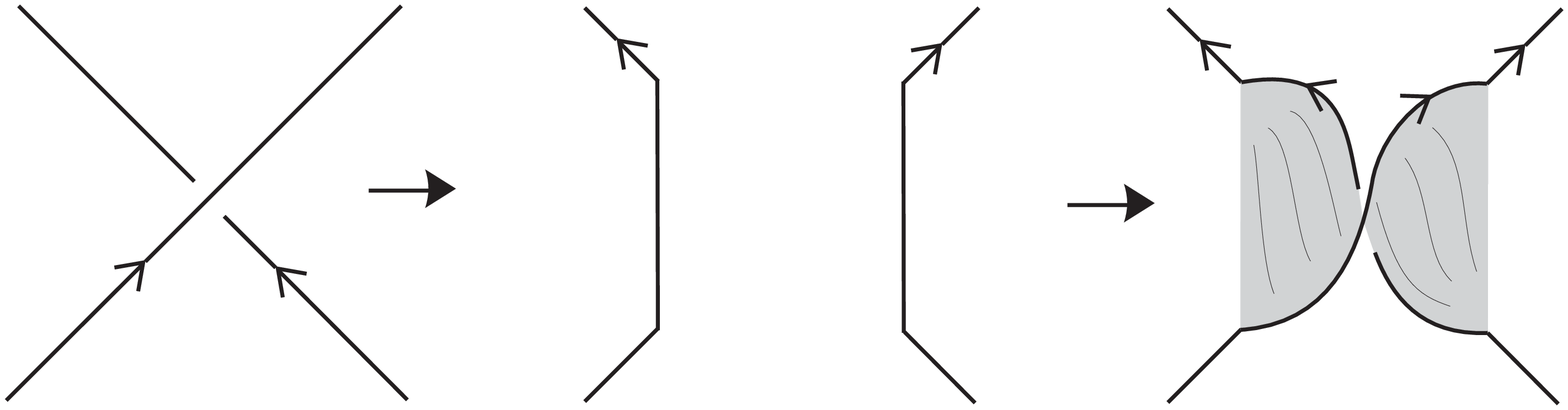}
\caption{Inserting a twisted strip at a crossing.}\label{fig:strip2}
\end{figure}


Recall that a {\it Seifert surface} for an oriented knot (or link)
$\alpha$ in $\mathbb{R}^3$ is a compact connected oriented surface
$\Sigma$ whose boundary is $\alpha$ with the induced orientation.
Every knot and link $\alpha$ has Seifert surfaces, and there is a
well-known method of constructing one using a regular projection
of $\alpha$ in the plane (\cite{BZ}, pp. 16-18). Each crossing is replaced by a
non-crossing that respects the orientation, the resulting circles
are capped off by disjoint embedded disks, and then a twisted
interval is inserted at each crossing, as in Figure
\ref{fig:strip2}. Finally the {\em Seifert circles},
the boundary of the resulting surface, are capped off by
disjoint embedded disks.

Let $\alpha: S^1=\mathbb{R}/\mathbb{Z}\to T^3$ be a knot in
$T^3$ and let $\widetilde{\alpha}:\mathbb{R}\to\mathbb{R}^3$ be a
lift of $\alpha$ to the universal cover $\widetilde{T^3} =
\mathbb{R}^3$, where $T^3$ is identified with
$\mathbb{R}^3/\mathbb{Z}^3$.
Let $(p,q,r)\in {\mathbb Z}^3$ be a generator of
the cyclic group of translations that preserve
$\widetilde{\alpha}$. Note that $(p,q,r)$ is determined up to
multiplication by $\pm 1$ and we choose the sign so that
$\ta(t+1)=\ta(t)+ (p,q,r)$. Any subset of $\rrr$ that is invariant
under this group will be said to be {\it $(p,q,r)$-periodic}. The
following definition adapts the classical concept of Seifert
surfaces for knots in ${\mathbb R}^3$ to the present context.

\begin{defi}
{\rm Let $\al$ be a knot in $T^3$ whose lift $\ta$ has
non-compact components and let $(p,q,r)$ generate the cyclic group of translations that preserve $\ta$.
A smooth surface $\Sigma \subset \mathbb{R}^3$ is a {\it covering Seifert surface} for $\alpha$ if it satisfies the following conditions:}
\begin{enumerate}
{\rm
\item $\Sigma$ is connected, orientable, properly embedded in $\rrr$, and $(p,q,r)$-periodic;

\item $\partial\Sigma = \ta$; and

\item There is an affine half-plane $P_+\subset \rrr$ with boundary a
straight line $S$ such that
$\Sigma$ coincides with $P_+$ outside a
$\delta$-neighborhood $N$ of
$S$, for some sufficiently large $\delta$.
}
\end{enumerate}
{\rm In this case we say that $\hat\Sigma = \Sigma/\Zset(p,q,r)$
is a (generalized) {\it Seifert surface} for $\al$.
If the components of $\ta$ are compact, then a Seifert surface for one of the components can be translated by the action of
${\mathbb Z}^3$ to give a periodic covering Seifert surface.}
\label{defseifsurf} \end{defi}


\begin{figure}[H]
\centering
\includegraphics*[width=.4\linewidth]{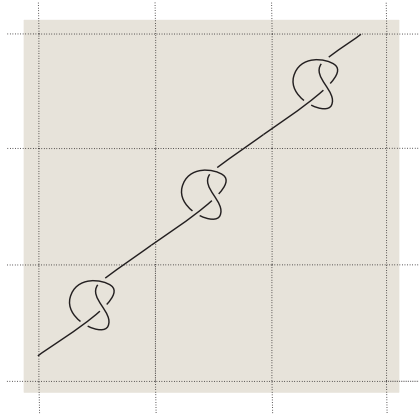}
\caption{A $(p,q,r)$-periodic knot projected into an affine
plane.}
\end{figure}


In this section we shall usually deal with covering Seifert
surfaces, but the same properties could be developed for
Seifert surfaces, and there is a complete correspondence.

Clearly the half-plane $P_+$ and its boundary $S$ are also
$(p,q,r)$-periodic.  It is convenient to choose $P_+$ to be
disjoint from $\ta$ and such that $P_+\subset\Sigma$. Given a covering Seifert surface $\Sigma$ of $\al$, we
define the {\em associated vector field} $X=X(\al,\Sigma)$ along $\ta$
in $\rrr$ to be the $(p,q,r)$-periodic unit vector field along
$\ta$ that is orthogonal to $\ta$, tangent to $\Sigma$, and
directed towards the interior of $\Sigma$.

Given two covering Seifert surfaces $\Sigma_i$ with associated vector
fields $X_i$, $i=1,2$, the {\em relative rotation number} $\rho(\Sigma_1,\Sigma_2)$
of $\Sigma_2$ with respect to $\Sigma_1$ is defined to be the
number of revolutions that $X_2$ makes with respect to $X_1$ in
the positive direction in the normal plane field to $\ta$ along
$\ta$ from a point on $\ta$ to its first $(p,q,r)$-translate in
the positive direction, i.e., in one circuit of $\alpha$.
Note that this number is independent of
the orientation of $\al$, since changing the orientation of $\al$
also changes the orientation of the normal plane.

\begin{proof}[Proof of Proposition \ref{2Seif}.]
First we construct a covering Seifert surface for a
knot $\alpha$ in $T^3$ with period $(p,q,r)$.
Choose a $(p,q,r)$-periodic plane $P$ meeting the proper curve
$\ta$ such that the orthogonal projection of $\ta$ onto $P$ is
regular (i.e., the only singularities are transverse double
points). Fix an orientation of $\ta$. The $(p,q,r)$-periodicity of
both $\ta$ and the affine plane $P$ permits us to adapt the
classical construction of the Seifert surface of a knot in $\rrr$
(see \cite{BZ}, pp. 16-18) in a $(p,q,r)$-periodic fashion. At
each crossing of the image of $\ta$ in $P$,
which we call the the knot diagram,
replace the crossing by two arcs, respecting the orientation of
$\ta$, and insert a twisted strip, as in Figure \ref{fig:strip2}.
Do this so that the resulting collection of ``Seifert curves'' is
pairwise disjoint and $(p,q,r)$-periodic. The Seifert curves that
are simple closed curves are capped off in a periodic fashion by
mutually disjoint disks meeting $P$ only in their boundaries. Then
there will be a number of non-compact proper $(p,q,r)$-periodic
Seifert curves left
over. It is easy to check that this number will be odd, say
$2k+1$, with $k+1$ of them oriented in the positive direction of
$\ta$ and the other $k$ in the opposite direction. (To see this,
consider a plane perpendicular to the direction $(p,q,r)$ that
meets $\ta$ transversely and examine the sign of the intersections
of $\ta$ with this plane.) These curves can be capped off in pairs
with opposite orientations by disjoint oriented periodic infinite
strips, starting with a pair whose projections are adjacent in the
plane $P$. This process will leave one $(p,q,r)$-periodic Seifert curve which can be joined to a half plane $P_+$ contained
in $P$ by another infinite periodic strip so that the result is embedded. The whole construction is done
so as to preserve $(p,q,r)$-periodicity. The result is a covering Seifert surface for $\al$ that coincides with $P_+$ outside a sufficiently large tubular neighborhood $N$ of the line $S$.

Now suppose that $\Sigma_1$ and $\Sigma_2$ are two covering Seifert
surfaces for $\al$. Take a $(p,q,r)$-periodic line $S$ in $\Sigma_1$ and a tubular neighborhood $N$ of $S$ sufficiently large so that the parts of  $\Sigma_1$ and $\Sigma_2$ outside $N$ are half-planes. Remove these half-planes and add an infinite periodic strip in $\partial \bar N$ to connect $\Sigma_1$ and $\Sigma_2$, if necessary. Thus we
obtain a new proper $(p,q,r)$-periodic surface $\Sigma$ which
agrees with the union of $\Sigma_1$ and $\Sigma_2$ inside $N$.
This surface $\Sigma$ will be a proper immersed surface
contained in $\bar N$. Note that
$\Sigma$ projects onto a compact oriented immersed surface on
$T^3$. The following lemma will complete the proof, since
by definition $\rho(\Sigma_1,\Sigma_2)=\rho(X_1,X_2)$.
\end{proof}

\begin{lem} Let $\Sigma$ be a properly immersed $(p,q,r)$-periodic oriented surface
in ${\mathbb R}^3$ whose boundary has two components, one being
$\ta$ with the positive orientation and the other $\ta$ with the
negative orientation, and which projects to a compact surface in
$\widehat T^3={\mathbb
R}^3/{\mathbb Z}(p,q,r)$. Let $X_1$ and $X_2$ be the
vector fields associated to the
two boundary components of $\Sigma$. Then the rotation number
$\rho(X_1,X_2)$ of $X_2$ relative to $X_1$ along $\ta$ is zero.
\label{2boundaries} \end{lem}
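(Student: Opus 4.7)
The plan is to pass to the quotient $\widehat T^3 = \mathbb R^3/\mathbb Z(p,q,r)$, reinterpret $\rho(X_1,X_2)$ as the difference of two framing integers for pushoffs of $\hat\alpha$ on the boundary of a tubular neighborhood, and use $\hat\Sigma$ as a homological filling to force these framings to agree. Note that $\widehat T^3 \cong \mathbb R^2 \times S^1$ is homotopy equivalent to $S^1$, so $H_1(\widehat T^3) = \mathbb Z$ with generator $[\hat\alpha]$ and $H_2(\widehat T^3) = 0$. The compact immersed oriented surface $\hat\Sigma \subset \widehat T^3$ is a $2$-cycle (its boundary $\hat\alpha - \hat\alpha$ vanishes as a $1$-chain), hence $[\hat\Sigma] = 0$ and the intersection pairing $[\hat\Sigma] \cdot [\hat\alpha]$ vanishes. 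By a small perturbation of the interior of $\hat\Sigma$ fixing its boundary (which preserves $X_1$ and $X_2$), I may assume that the interior of $\hat\Sigma$ meets $\hat\alpha$ transversely at finitely many points with signs $\varepsilon_i$ summing to $0$.

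Now choose a small tubular neighborhood $N$ of $\hat\alpha$ with meridian $\mu$ and longitude $\lambda$ generating $H_1(\partial N;\mathbb Z) = \mathbb Z^2$, and set $\hat\Sigma' := \hat\Sigma \setminus \mathrm{int}(N)$. This is a $2$-chain in $\widehat T^3 \setminus \mathrm{int}(N)$ whose boundary on $\partial N$ consists of two pushoffs $\gamma_1, \gamma_2$ of $\hat\alpha$ in the directions $X_1, X_2$ (with the inherited orientations), together with finitely many meridional loops $\varepsilon_i\mu$ coming from the interior transverse intersections. Writing $[\gamma_1] = \lambda + k_1\mu$ and $[\gamma_2] = -\lambda - k_2\mu$, where $k_i$ is the framing integer of $X_i$ relative to $\lambda$, and using $\sum_i \varepsilon_i = 0$, one obtains $[\partial\hat\Sigma'] = (k_1 - k_2)\mu$ in $H_1(\partial N)$, a class that must vanish in $H_1(\widehat T^3 \setminus \mathrm{int}(N))$ since it bounds $\hat\Sigma'$.

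The crux is then to show that the inclusion $\iota\colon \partial N \hookrightarrow \widehat T^3 \setminus \mathrm{int}(N)$ induces an injection on $H_1$. A Mayer--Vietoris computation, using $H_2(\widehat T^3) = 0$ and $H_1(\widehat T^3) = \mathbb Z$, shows that $H_1(\widehat T^3 \setminus \mathrm{int}(N))$ is torsion-free of rank $2$ and that $\iota_*(\mu) \neq 0$. Composing with $j_*\colon H_1(\widehat T^3 \setminus \mathrm{int}(N)) \to H_1(\widehat T^3) = \mathbb Z$, which sends $\iota_*(\mu) \mapsto 0$ and $\iota_*(\lambda) \mapsto [\hat\alpha] = 1$, a brief diagram chase forces $\ker\iota_* = 0$. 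Hence $k_1 = k_2$, and therefore $\rho(X_1, X_2) = k_2 - k_1 = 0$. The main technical obstacle I anticipate is verifying this injectivity of $\iota_*$ together with the careful homological bookkeeping of the meridional contributions from interior intersections to $[\partial\hat\Sigma']$.
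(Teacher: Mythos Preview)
Your approach is essentially the same as the paper's: pass to the quotient $\widehat T^3 \cong \mathbb R^2\times S^1$, excise a tubular neighborhood of $\hat\alpha$, compare the two pushoffs on the boundary torus, and use the Mayer--Vietoris sequence together with $H_2(\widehat T^3)=0$ to force the difference class to vanish. The paper argues slightly more directly by observing that $[\alpha_1]-[\alpha_2]$ dies in both $H_1(V)$ (since it is a multiple of the meridian) and in $H_1(M)$ (since $\widehat\Sigma$ bounds it there), and then invokes injectivity of $i'_*\oplus i''_*$; you instead prove the stronger fact that $\iota_*\colon H_1(\partial N)\to H_1(M)$ is itself injective, which is fine but more than you need. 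The one place where you are actually more careful than the paper is in tracking the meridional contributions $\sum_i\varepsilon_i\mu$ from interior intersections of $\hat\Sigma$ with $\hat\alpha$ and using $[\hat\Sigma]\cdot[\hat\alpha]=0$ to kill them; the paper simply asserts that a small isotopy places $\widehat\Sigma$ inside $M$, which tacitly relies on the same vanishing of algebraic intersection number (one tubes pairs of opposite-sign intersection points together).
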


\begin{proof}[Proof]
The quotient mappings $\pi': {\mathbb
R}^3\to \widehat T^3$ and $\pi: \widehat T^3\to T^3$ are
projections of covering spaces. Note that $\widehat
T^3$ is diffeomorphic to ${\mathbb R}^2\times S^1$. The curve
$\ta$ projects under $\pi'$ to a compact knot $\ha$ in $\widehat T^3$.
Let $V$ be a small closed tubular neighborhood of $\ha$
(so that $V$ is diffeomorphic
to $S^1\times D^2$, where $D^2$ is the closed unit disk in the
plane) and set $M=\widehat T^3\sm \Int\ V$. Let $\alpha_1$
and  $\alpha_2$ be the loops on the torus $\partial V=\partial M$ obtained by isotoping $\alpha$ in the directions of the vector fields $X_1$ and $X_2$.
We claim that their homology classes satisfy $[\alpha_1] = [\alpha_2]\in H_1(\partial V)$, which implies that the mutual
rotation number $\rho(X_1,X_2)$ vanishes, as claimed.

To see this claim, note that there is a compact oriented surface $\widehat\Sigma$ immersed in $M$ obtained from the projection of
$\Sigma$ into $\widehat T^3$ by a small isotopy
so that its boundary $\partial \widehat\Sigma$ is the union of
$\alpha_1$ and $\alpha_2$ with opposite orientations. Consequently $i''_*([\alpha_1]-[\alpha_2])=0\in H_1M$, where $i'':\partial V\to M$ is the inclusion. Now let $\ell$ and $m$ be the oriented longitude and meridian of $\partial V$, so that there are integers $n_1$ and $n_2$ such that the homology classes of $\alpha_1$ and $\alpha_2$ on $\partial V$ satisfy $[\alpha_r]=[\ell]+n_r[m], r=1,2$.
Since $m$ is contractible on the solid torus $V$, $i'_*([\alpha_1]-[\alpha_2])=0\in H_1V$, where $i':\partial V\to V$ is the inclusion.
In the Mayer-Vietoris exact sequence
$$\cdots \to H_2\widehat T^3 \overset{\partial_2}\to H_1\partial V \xrightarrow[\approx]{i_*} H_1 V\oplus H_1M
\overset{j_*}\to H_1\widehat T^3 \to \cdots$$
 $i_*=i'_*+i''_*$ so $i_*([\alpha_1]-[\alpha_2])=0$.
Since $H_2\widehat T^3\approx H_2({\mathbb
R}^3/{\mathbb Z}(p,q,r))=0$,
$i_*$ is injective, so $[\alpha_1]=[\alpha_2]$, as claimed.
\end{proof}


\section{The Thurston-Bennequin invariant}\label{tbinv}

In this section, we extend the classical definition of the Thurston-Bennequin invariant $\tbrom(\alpha)$ to all
Legendrian knots for an arbitrary contact structure on $T^3$, and
we show how to compute the invariant $\tbrom(\alpha)$
of Legendrian knots in $(T^3,\xi_n)$ using projections.

\subsection{The Thurston-Bennequin invariant for null-homol\-ogous knots.}
First we recall the definition of ${\tbrom}(\alpha)$
for an oriented null-homologous Legendrian knot $\alpha$
relative to a contact structure $\xi$ on an oriented $3$-manifold
$M^3$. Since
$\alpha$ is null-homologous it has a Seifert surface $\Sigma$,
which by definition is an oriented compact connected surface
embedded in $M^3$ with oriented boundary $\alpha$.
Let $X$ and $Y$ be unit
vector fields orthogonal to $\alpha$
(with respect to a metric on $M$), with $X$ tangent to
$\Sigma$ and $Y$ tangent to $\xi$. Then ${\rm tb}(\alpha)$ is defined to be the algebraic number of rotations of $Y$ relative to $X$ in the normal plane field $\alpha^\perp$, which is oriented by the orientations of $M^3$ and $\alpha$, as we make one circuit of $\alpha$ in the positive direction. If we let $\alpha^+$
be a knot obtained by pushing $\alpha$ a short distance
in the direction $Y$, then it is easy to see that
${\rm tb}(\alpha)$ is the intersection number of $\alpha^+$
with $\Sigma$. This is just the linking number of $\alpha^+$
with $\alpha$ because $\Sigma$ is a compact oriented
surface with boundary $\al$.
An argument analogous to the proof of Lemma \ref{2boundaries},
taking $\Sigma$ to be the disjoint union of two Seifert
surfaces $\Sigma_1$ and $\Sigma_2$ for $\alpha$ with opposite
orientations, shows that ${\rm tb}(\alpha)$ is independent of the
choice of the Seifert surface.

\subsection{The Thurston-Bennequin invariant in $T^3$.}
Now consider $T^3$ with
an oriented contact structure $\xi$ and let $\al$ be a Legendrian
knot in $T^3$ with a covering Seifert surface $\Sigma$ for
$\al$ using the lift
$\ta$ to the universal cover ${\mathbb R}^3$. We can
define the rotation number of the lifted contact
structure $\widetilde\xi$ with respect to $\Sigma$ to be the number of rotations of one of the two unit
orthogonal vector fields $Y$ along $\ta$ that is tangent to $\widetilde\xi$ with respect to a unit orthogonal vector field $X$ along $\ta$ that is
tangent to $\Sigma$, in one circuit of $\alpha$.

\begin{defi}
{\rm The {\it \tb invariant} ${\rm tb}(\alpha)$ for a Legendrian knot $\al$ in $T^3$ is the rotation number of the contact structure $\tilde\xi$
with respect to a covering Seifert surface $\Sigma$ for $\al$.}
\label{def:tb}
\end{defi}
It is clear that, on $T^3=\Rset^3/\Zset(p,q,r)$,  ${\rm tb}(\alpha)$
is the rotation number of the induced contact structure $\hat\xi$ with
respect to a (generalized) Seifert surface $\hat\Sigma$ for $\al$
in one circuit of $\hat\al$.
As for knots in $\Rset^3$,
${\rm tb}(\alpha)$ will be the intersection number of $\ta^+$,
the lifted knot $\ta$ pushed a short distance in a direction
transverse to the lifted contact structure $\widetilde\xi$,
with the covering Seifert surface $\Sigma$, in one circuit of $\alpha$.
In this
case, however, if $\alpha$ is not null-homologous, then
${\rm tb}(\alpha)$ is not
a linking number, since $\Sigma$ will not be compact.

We note that Definition \ref{def:tb} is an extension of the
above definition of ${\rm tb}(\alpha)$ for a null-homologous
Legendrian knot $\alpha$ in an oriented $3$-manifold
$M^3$ endowed with a contact
structure $\xi$. As in the null-homologous case, the following holds.

\begin{lem}
The Thurston-Bennequin invariant for a Legendrian knot $\al$
in $\T$ is independent of the choice of the covering Seifert surface
and the orientation of $\alpha$.
\end{lem}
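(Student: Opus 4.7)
The plan is to split the lemma into two independent assertions: invariance under the choice of covering Seifert surface, and invariance under the orientation of $\alpha$. Both reduce to routine bookkeeping of rotation numbers in the oriented normal bundle of $\ta$ in $\rrr$, using structural results already established.

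For the first assertion, let $\Sigma_1$ and $\Sigma_2$ be two covering Seifert surfaces for $\alpha$ with associated unit vector fields $X_1$ and $X_2$ along $\ta$, and let $Y$ be a unit vector field along $\ta$ tangent to $\widetilde\xi$ and orthogonal to $\ta'$. In the oriented normal plane field $\ta^\perp$, rotation numbers are additive: the rotation of $Y$ relative to $X_1$ over one circuit of $\alpha$ equals the rotation of $X_2$ relative to $X_1$ plus the rotation of $Y$ relative to $X_2$. The first summand is precisely $\rho(\Sigma_1,\Sigma_2)$, which vanishes by Proposition \ref{2Seif}(2). Hence $\tbrom(\alpha)$ computed with $\Sigma_1$ agrees with $\tbrom(\alpha)$ computed with $\Sigma_2$.

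For the second assertion, I would show that reversing the orientation of $\alpha$ produces two sign changes that cancel. The vector field $X$ is characterized by being unit, tangent to $\Sigma$, orthogonal to $\ta$, and pointing into the interior of $\Sigma$; none of these conditions refers to a direction along $\ta$, so $X$ is unchanged. The vector field $Y$ in the contact plane, orthogonal to $\ta'$ with sign fixed by the orientation of $\widetilde\xi$, is likewise unchanged. However, the orientation of the normal plane $\ta^\perp$, induced by the orientations of $\rrr$ and of $\ta$, reverses, and the circuit of $\alpha$ is traversed in the opposite sense. Each of these reversals flips the sign of the signed rotation count of $Y$ relative to $X$, and the two flips cancel.

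The hard part will be the orientation bookkeeping in the second assertion: one must verify that reversing the orientation of $\alpha$ does not inadvertently alter either $X$ or $Y$ under their stated defining conditions, and that the two sign flips (from the reversed circuit direction and from the reversed plane orientation) are indeed the only effects on the signed rotation count. Once this is confirmed, both parts follow from short formal manipulations, and the lemma is established.
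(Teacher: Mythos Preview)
Your proposal is correct and follows essentially the same approach as the paper: independence of the Seifert surface is reduced to Proposition~\ref{2Seif}(2) via additivity of rotation numbers, and independence of orientation follows because reversing $\alpha$ simultaneously reverses the orientation of the normal plane and the direction of the circuit, with the two sign changes cancelling. Your treatment is slightly more explicit in unpacking the two sign flips and checking that $X$ and $Y$ are unaffected, but the underlying argument is the same.
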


\begin{proof}
According to Proposition \ref{2Seif}, the
rotation number of one covering Seifert surface for the knot $\al$ with
respect to another one is zero. Hence the rotation numbers of the
two covering Seifert surfaces with respect to the contact structure
coincide. Given an orientation of $\alpha$, we choose the orientation of the plane field orthogonal to $\alpha$ such that
the orientations of $\alpha$ and the plane field
determine the standard
orientation of $T^3$, so reversing the orientation of
$\alpha$ reverses the orientation of the plane orthogonal field as well and the rotation number does not change.
\end{proof}

It is worth remarking that our extended definition of ${\rm tb}(\alpha)$
continues to satisfy the usual properties: it does not change if we replace the vector field $Y$ tangent to $\widetilde\xi$ by $-Y$ or by a
vector field $Y^\pitchfork$ transverse to $\widetilde\xi$, or if we use
$-X$ or a vector field $X^\pitchfork$ transverse to $\Sigma$ in
place of $X$.

\subsection{Computation of tb using projections}\label{tbproj}
In this subsection we compute the Thurston-Bennequin invariant ${\rm tb}(\alpha)$ of an oriented Legendrian knot $\alpha$ in $T^3$ relative
to Kanda's tight contact structure $\xi_n$, $n>0$,
using the projections $p_{xy}, p_{xz}: T^3\to T^2$
defined in the Introduction
and a covering Seifert surface $\Sigma$ for
$\alpha$, as defined in Section \ref{SeifSurf}.

First, we recall how to do this for a generic oriented
Legendrian knot $\alpha$ in $\Rset^3$ with the standard contact structure $\xi_{std}$ utilizing its front and Lagrangian projections. The {\it front ({\rm resp.,} Lagrangian) projection} of a Legendrian knot $\alpha$ in $(\Rset^3,\xi_{std})$ is the map $\bar\alpha=pr_F\circ \alpha$ (resp., $\bar\alpha=pr_L\circ \alpha$) where the map $pr_F:\Rset^3\to\Rset^2$ (resp., $pr_L:\Rset^3\to \Rset^2$) is defined by $pr_F(x,y,z)=(x,z)$ (resp., $pr_L(x,y,z)=(x,y)$).


\begin{figure}[H]
\centering
\includegraphics*[width=.49\linewidth]{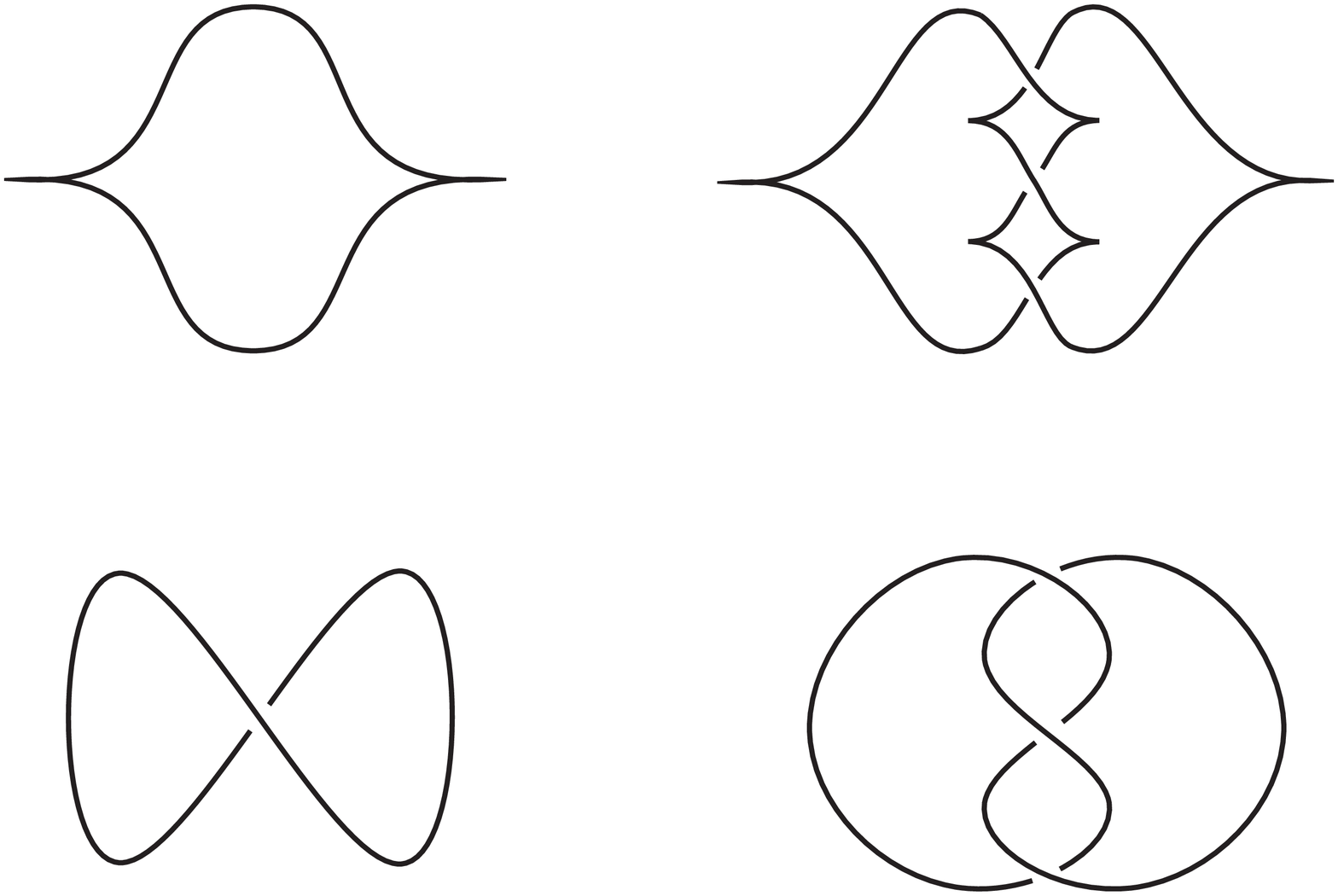}
\caption{Front and Lagrangian projections of a Legendrian unknot and trefoil for $\xi_{std}$ on $\Rset^3$.}\label{fig:projections}
\end{figure}



\begin{figure}[H]
\centering
\psfrag{+}{$+$}
\psfrag{-}{$-$}
\includegraphics*[width=.4\linewidth]{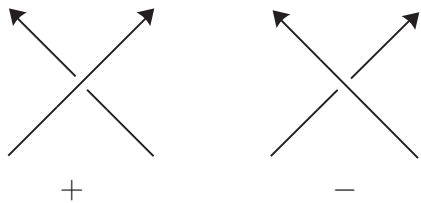}
\caption{Positive and negative crossings.}\label{fig:crossings}
\end{figure}


\bigskip

\noindent{\bf Computation of tb using projections of $\Rset^3$.}
Let
$\bar\alpha=pr_{F}\circ \al$ be the front projection of $\alpha$.
The vector field $Y=\partial/\partial z$ is transverse to $\xi_{std}=\ker(dz-ydx)$ along $\alpha$, and we let
$\alpha^+$ be a knot obtained by shifting $\al$ slightly in the direction $Y$. Then, as observed above, ${\rm tb}(\alpha)$ is
the intersection number of $\al^+$ with the Seifert surface
$\Sigma$, and this is the definition of the
linking number of $\al^+$ with
$\al$. This linking number is known to be half of the algebraic number of crossings of $\bar\alpha$ and $\bar\alpha^+$, where
a crossing is positive if it is right handed and negative if it is left handed (see Figure \ref{fig:crossings}).
One can check this directly by observing the intersections
of $\al^+$ and $\Sigma$, if $\al^+$ is chosen to be slightly
above the $(x,y)$-plane, and the part of $\Sigma$ near to
$\bar\al$ is chosen to be in this plane. Each crossing of
$\bar\al$ will yield one intersection point and contribute
$(+1)$ if the crossing is positive and $(-1)$ if it is negative.


\begin{figure}[H]
\centering
\includegraphics*[width=.7\linewidth]{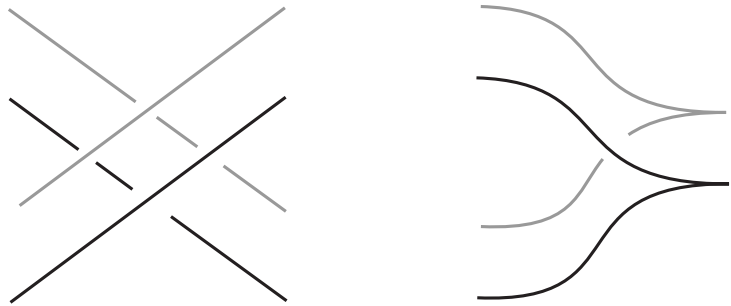}
\caption{The pieces of the curves $\bar\alpha$ and $\bar\alpha^+$.}
\label{fig:shiftingknot}
\end{figure}


The crossings and cusps of $\bar\alpha$ and $\bar\alpha^+$
in the front projection are shown in Figure \ref{fig:shiftingknot}, with $\bar\al$ in black and $\bar\al^+$ in gray.
Each cusp of $\bar\al$ pointing to the left contributes $0$
to the intersection of $\alpha$ and $\Sigma$
since $\alpha^+$ does not meet $\Sigma$ near the cusp,
and a cusp pointing to the right contributes $(-1)$, so two
adjacent cusps contribute $(-1)$. Hence if $P$ and
$N$ are the number of positive and negative crossings
of the front projection $\bar\al$,
respectively, and $C$ is the number of cusps, we conclude that
\begin{equation}\label{eq:two}
{\rm tb}(\alpha)= P - N - C/2.
\end{equation}

In the Lagrangian projection $pr_L(x,y,z)=(x,y)$)
the knots $\alpha$ and $\alpha^+$ project to the same diagram, since $\alpha^+$ is obtained by moving $\alpha$ a small distance in the $Y=\partial/\partial z$-direction. We can see that ${\rm tb}(\alpha)$, the linking number of $\alpha$ and $\alpha^+$, is the algebraic number of the positive and negative crossings of the Lagrangian projection of $\alpha$, ${\rm tb}(\alpha)= P - N$, as in \cite{Et}, p. 13.
\bigskip


\noindent{\bf Computation of tb for Legendrian knots in $(T^3,\xi_n)$.}
Now consider an oriented Legendrian knot $\alpha$ in $(T^3,\xi_n)$ for a fixed $n>0$. Let $$\hat{p}_{xy}, \hat{p}_{xz}:T^2\times\Rset\to T^2$$ be the lifts
to the covering space $T^2\times\Rset$ of the projections
$p_{xy}, p_{xz}:T^3 = T^2\times S^1\to T^2$, where
$\Rset\to S^1$ is the universal cover of the circle.
We shall show how to compute ${\rm tb}(\alpha)$ using the lifted projections $\hat{p}_{xy}, \hat{p}_{xz}$ in a similar way to the case of $({\mathbb R}^3,\xi_{std})$
treated above. We cannot use a linking number here, since the lifted knot $\ha$ does not bound a compact surface, but we can use the intersection number of a perturbed lifted knot $\ha^+$ with the image $\hat\Sigma$
in $T^2\times\Rset$ of the covering Seifert surface of $\Sigma$ of $\alpha$ in ${\mathbb R}^3$ in one circuit of $\alpha$.

\bpr For a generic Legendrian knot $\alpha$ in $(T^3,\xi_n)$
and the projection $p_{xy}$, $$\tbrom(\alpha)=P-N+C/2$$
where $P$ is the number of positive crossings, $N$ is the
number of negative crossings, and $C$ is the number of cusps
of $p_{xy}\circ\alpha$, which must be even,
in one circuit of $\alpha$.  \label{tbpxy}
\epr

As before, since $\alpha$ is generic, the only singularities are
transverse double points and isolated cusps. To determine
which crossings are positive and which are negative
we use a single component of the lift $\ha$ of $\alpha$ to $T^2\times{\mathbb R}$
to see which strand of this component is above and which one is below;
then following $\ha$ from the double point on one arc to the
same double point on the other arc,
the change in the vertical coordinate $z$ determines which arc
is above the other, and hence whether the crossing is positive or
negative (see Figure \ref{fig:crossings}).
Note that each crossing in the projection corresponds to exactly one pair
of strands in $\ha$, and crossings involving two different
components do not contribute anything. The statement of the
Proposition \ref{tbpxy} could just as well be formulated in terms of one period of the lifted curve $\ha$.


\begin{figure}[H]
\centering
\psfrag{1}{$(+1)$}
\psfrag{2}{$(-1)$}
\psfrag{a1}{$\hat\alpha^+$}
\psfrag{S}{$\hat\Sigma$}
      \includegraphics*[width=.7\linewidth]{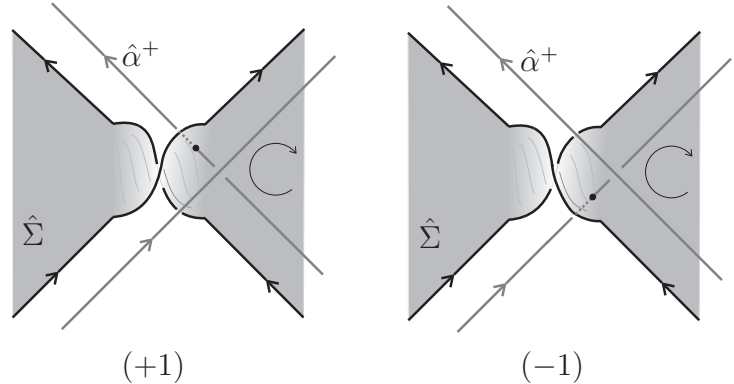}
\caption{The curve $\hat\alpha^+$, in gray, intersects $\hat\Sigma$ positively and negatively close to positive and negative crossings, respectively.}\label{fig:puncturedcrossings}
\end{figure}


\begin{proof}[Proof of Proposition \ref{tbpxy}.]
Lift the contact structure $\xi_n$ to the contact structure
$\hat{\xi}_n$ on $T^2\times{\mathbb R}$. The perpendicular vector field
$\hat Y=(\cos 2\pi nz, \sin 2\pi nz, 0)$ determines the orientation
of $\hat{\xi}_n$. Let $\ha^+$ be a copy of $\ha$ obtained by shifting $\ha$ slightly in the positive direction of $\hat Y$. By Definition \ref{def:tb} the \tb invariant of $\alpha$ is equal to the signed intersection number of $\hat\alpha^+$ with a Seifert surface $\hat\Sigma$ of $\alpha$ in one circuit of $\hat\alpha$.
It is convenient to choose the Seifert surface $\hat\Sigma$
to descend vertically near the Seifert curves, except near the cusp
points on $T^2$, where the covering Seifert surface must move out horizontally
for a small distance before descending. Then it is easy
to check that the contribution of a crossing
will be $(+1)$ for a positive crossing and $(-1)$ for a negative crossing, since the upper strand of
$\hat\alpha^+$ near the crossing will not meet $\hat\Sigma$, and
the lower strand will pierce $\hat\Sigma$ just once, with the
appropriate orientation, as shown in Figure \ref{fig:puncturedcrossings}
for certain typical values of $z$.

The contribution of a cusp point of $\hat p_{xy}\circ \hat\alpha$ is
illustrated in Figure \ref{fig:piercing}, which shows
$\ha$ and $\ha^+$ on $T^2\times\R$.
The arrows show the direction in which
the vertical coordinate $z$ increases. If the vector field $Y$
points to the left of $\ha$ as $\ha$ approaches the cusp point,
as in Figure \ref{fig:piercing} (a), then the
knot $\ha^+$ perturbed in the direction $Y$
will be above $\hat\Sigma$, so there is no intersection
and the contribution will
be $0$. If, on the other hand, $Y$ points toward the right as
$\ha$ approaches the cusp point, then there will be a single
intersection point $p$ where the perturbed knot $\ha^+$ pierces $\hat\Sigma$, and
the contribution will be $+1$, as the orientations in Figure \ref{fig:piercing} (b)
show.


\begin{figure}[H]
\centering
\psfrag{a1}{$\hat\alpha$}
\psfrag{a2}{$\hat\alpha^+$}
\psfrag{p}{$p$}
\psfrag{S}{$\hat\Sigma$}
\psfrag{A}{(a)}
\psfrag{B}{(b)}
      \includegraphics*[width=.62\linewidth]{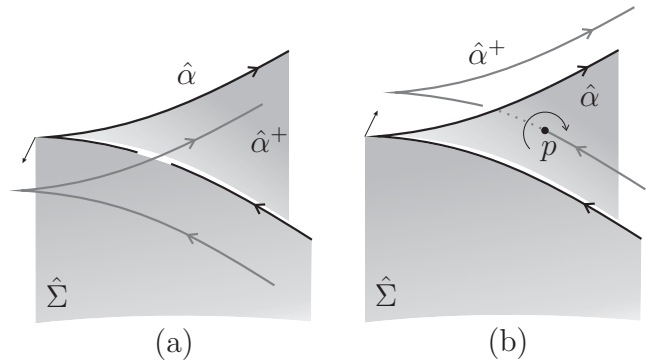}
\caption{In part (a), the perturbed (gray) curve $\hat\alpha^+$  does not intersect $\hat\Sigma$ and, in part (b), $\hat\alpha^+$ intersects $\hat\Sigma$ positively.}\label{fig:piercing}
\end{figure}


The following lemma will complete the proof of the Proposition.
\end{proof}

\blm
The contributions of the cusps alternate between $+1$ and $0$,
so the total contribution of the cusps is $C/2$, where $C$ is
the number of cusps.
\elm

\begin{proof}
If the direction of increasing $z$ is the same from a cusp with value
$+1$ to the next cusp, then $Y$ will point to the left as the next cusp
is approached and its contribution will be $0$.
On the other hand, if the direction of
increasing $z$ reverses, then again the
contribution will be $0$, since the direction of increasing $z$ will be reversed, so $Y$ will point to the right leaving the next cusp
in the direction of increasing $z$. In a similar manner, if a cusp has contribution $0$,
the next cusp will contribute $+1$.
\end{proof}

\bigskip


\noindent{\bf The projection onto the $xz$-plane.}
Now we shall compute $\tbrom(\alpha)$ using the projection $p_{xz}$ of a generic Legendrian knot $\alpha$. Set $\alpha(t)=(x(t),y(t),z(t))$ and note that by a small Legendrian perturbation of
$\alpha$ we can suppose that
\begin{equation}
\hskip 2cm 2nz(t)\notin \Zset {\rm\ \ whenever}\ \ (x'(t),y'(t))= (0,0).\hskip 3cm
\label{znonzero} \end{equation}

In other words, for these values of
$t$ the vertical component of $\alpha'(t)$ is not zero. For other
values of $t$, the plane $\xi_n$ of the contact structure projects
onto the tangent plane of $T^2$ under $p_{xz}$, and so the image
$p_{xz}\circ\alpha$ is a smooth non-singular curve.

The argument used for the projection $p_{xy}$ shows the following result. By analogy with the previous analysis, we let the covering Seifert
surface move off the knot $\ha$ in the direction of the $y$-axis,
instead of the $z$-axis. To determine whether a crossing is positive
or negative, lift the knot to $S^1\times {\mathbb R}\times S^1$,
where the order of the arcs passing through a double point is
well defined.

\bpr For a generic Legendrian knot $\alpha$ in $(T^3,\xi_n)$
that satisfies (\ref{znonzero}), $$\tbrom(\alpha)=P-N$$
where $P$ is the number of positive crossings and $N$ is the
number of negative crossings
of $p_{xz}\circ\alpha$ in one circuit of $\alpha$.  \label{tbpxz}
\epr
It is possible to calculate $\tbrom(\alpha)$ for a
generic Legendrian knot $\alpha$ that does not
satisfy (\ref{znonzero}) using its projection in the
$xz$-plane, but the formula is more complicated, so
we omit it.

These calculations prove the first two assertions of Theorem
\ref{projectionthm}, which relate to the \tb invariant.


\section{The Rotation Number}\label{s:maslov}

Recall that a null-homologous oriented Legendrian knot $\alpha$ in
a $3$-manifold $M$ with an oriented contact structure $\xi$ has an
invariant $r(\alpha)$, called the rotation (or Maslov) number, which
depends on the choice of a non-vanishing section $Z$ of $\xi$.
If $\al$ is null-homologous, then it has a covering Seifert surface $\Sigma$,
and the vector field $Z$ can be determined (up to Legendrian homotopy)
by requiring that it extend to a non-vanishing section of $\xi$
over $\Sigma$.

\bdf
{\rm The {\it rotation number} (or Maslov number) of the oriented Legendrian knot $\alpha$,
$r(\alpha)$, is the
algebraic number of rotations of the tangent vector $\alpha'$ with
respect to $Z$ in the plane field
$\xi$ in a single circuit of $\alpha$.}
\edf

\bpr If $\alpha$ is a null homologous oriented Legendrian knot,
the rotation number $r(\alpha)$ does not depend on the section $Z$.
Furthermore, two Legendrian knots that are isotopic through Legendrian knots have the same rotation number with respect to the same section $Z$. \label{independ}
\epr

\begin{proof} The second affirmation in obvious, since the
rotation number is an integer that varies continuously as
the Legendrian knot varies.

Now let $Z'$ be another global section of $\xi$ and let
$f:M\to S^1$ be the function which gives the angle from $Z$ to
$Z'$. Since $\alpha$ is null homologous in $M$, the image
$f_*[\alpha]$ of its homology class must vanish in $H_1(S^1)$, so
the mutual rotation number of $Z'$ relative to $Z$ is $0$.
Hence the rotation numbers are the same.
\end{proof}

Consequently $r(\alpha)$ for a null-homologous knot $\alpha$
depends only on the orientations of $\alpha$ and $\xi$. Reversing one of these orientations
changes the sign of $r(\alpha)$.
As in the case of the definition of the
Thurston-Bennequin invariant, the rotation number can also be
defined for non-null homologous oriented Legendrian knots, but then it does depend on the choice of the section $Z$ of $\xi$.

This dependence holds, in particular, when
$M=T^3$ (see \cite{Gh}), but for Kanda's tight contact structure
$\xi_n$, we can use the covering Seifert
surface of $\al$ to determine $Z|_{\ta}$ up to Legendrian homotopy.

\blm Let $\al$ be an oriented Legendrian knot
for the contact structure $\xi_n$ on $T^3$,
and let $\Sigma\subset \Rset^3 = \tilde T^3$ be a covering Seifert
surface for $\al$ containing an affine half-plane $P\subset\Rset^3$.
Then there is a non-vanishing Legendrian vector field $Z$ in
$\xi_n|\Sigma$ whose restriction $Z|_P$ is a section of $\xi_n \cap P$.
Furthermore along $\ta$, the restriction $Z|_{\ta}$ is
unique up to periodic Legendrian homotopy. \label{chooseZ} \elm

\begin{proof} If the vertical Legendrian vector field $\partial/\partial z$ is in $P$, then along $P$, $Z=\partial/\partial z$ is a
section of $\xi_n\cap P$.
Furthermore, $\xi_n$ and $P$ are transverse except along isolated
values of $z$, so by continuity the section $Z$ is determined up to
multiplication by a non-vanishing function.
If $\partial/\partial z$ is not in $P$, then $\xi_n$ and $P$ are
transverse, so again $Z$ is determined as a section of
the line field $\xi_n\cap P$ on $P$.
Now extend $Z$ arbitrarily to a periodic and non-vanishing
vector field tangent to $\Sigma$. Clearly $Z$ is
determined up to periodic Legendrian homotopy on $P$, and the usual
argument shows that the restriction $Z|_{\ta}$ is also determined
up to periodic Legendrian homotopy along $\ta$.
\end{proof}

It is interesting to note that on $P$ the restriction $Z|_P$ is
periodically Legendrian homotopic to the vertical Legendrian vector field $\partial/\partial z$, since, in the second case of the above proof,
the angle between them is never $\pi$.

\bdf {\rm The {\it rotation number} (or Maslov number) of an oriented Legendrian knot $\alpha$,
$r(\alpha)$, on the contact manifold $(T^3,\xi_n)$ is the
algebraic number of rotations in the plane field
$\xi_n$ of the tangent vector $\alpha'$ with
respect to the vector field $Z$ given by Lemma \ref{chooseZ}, in a single circuit of $\alpha$.} \label{r on xin}
\edf


\subsection{Computation of the rotation invariant using projections}\label{maslovproj}

\noindent
{\bf Computation of $r$ for Legendrian knots in $(\Rset^3,\xi_{std})$.} Let $\alpha$ be an oriented generic Legendrian knot in the standard contact structure $\xi_{std}=\ker(dz-ydx)$ on $\Rset^3$. In order to calculate the rotation number $r(\alpha)$ we fix the Legendrian vector field $Y=\partial/\partial y$. Then $r(\alpha)$ is the algebraic number of times the field of tangent vectors $\alpha'$ rotates in $\xi_{std}$ relative to $Y$, so $r(\alpha)$ can be obtained by counting how
many times $\alpha'$ and $\pm Y$ point in the same direction. The sign is determined by whether $\alpha'$ passes $\pm Y$ counterclockwise (+1) or clockwise (-1), and then we must divide by two, since in one rotation
$\alpha'$ passes both $Y$ and $-Y$.


\begin{figure}[H]
\centering
      \includegraphics*[width=.85\linewidth]{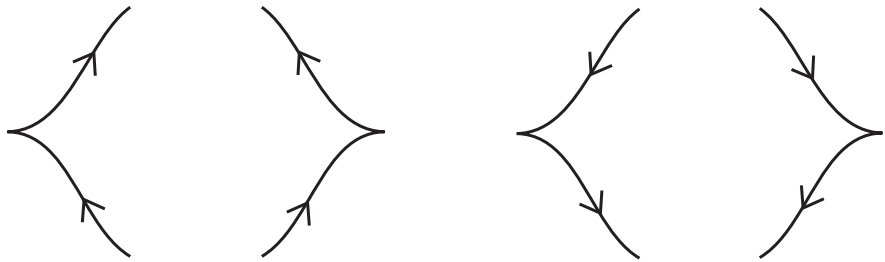}
\caption{Up cusps and down cusps.}\label{fig:cusps}
\end{figure}


If $\bar\alpha$ denotes the front projection of $\alpha$, the field of tangent vectors to $\bar\alpha$, $\bar\alpha'$, points in the direction of $\pm Y=\pm \partial/\partial y$ at the cusps,
which are horizontal in the $xz$-plane. Let us analyze the
upwards left-pointing cusp, the first of the four cusps in
Figure \ref{fig:cusps}. The value of $y$ is just the slope of
$\bar\alpha'$, so $y$ is negative before the cusp and becomes positive,
and thus at the cusp $y(t)$ is increasing so $y'(t)$
is positive and $\alpha'$ passes $+Y$ at the cusp point.
Before the cusp, $x$ is decreasing,
so $x'(t)$ passes from negative to positive at the cusp.
Thus the vector $\bar\alpha'(t)$ turns in the negative direction,
and the contribution is $(-1)$.
By a similar analysis of the other three cases, we see that
a cusp going upwards (the first two cusps in the figure) contributes $(-1)$, while a cusp going downwards (the third and fourth cusps in
the figure) contributes $(+1)$.
Therefore we have shown that the rotation number of $\alpha$ in the front projection is
$$r(\alpha)=1/2(C_d-C_u),$$
where $C_u$ is the number of up cusps and $C_d$ is the number of down cusps in the front projection of $\alpha$. Since $\alpha$ is null-homologous, $r(\alpha)$ does not depend on the choice of
the vector field $Y$, as we observed above.

In the Lagrangian projection $pr_L(x,y,z)=(x,y)$, the vector field $Y$ projects to $\partial/\partial y$, thus the rotation number of $\alpha$ is simply the winding number of the field of tangent vectors of the Lagrangian projection $pr_L\circ \alpha$
of $\alpha$ in $\xi_{std}$,
$$r(\alpha)={\rm winding} (pr_L(\alpha)).$$


\medskip
\noindent
{\bf Computation of $r$ for Legendrian knots in $(T^3,\xi_n)$.}
Let $\alpha$ be an oriented generic Legendrian knot in $T^3$
for the tight contact structure $\xi_n$.
We shall calculate the rotation invariant
$r(\al)$ relative to the vertical vector field
$Z=\partial/\partial z\in \xi_n$.


\begin{figure}[H]
\centering
\psfrag{1}{$(-1)$}
\psfrag{2}{$(+1)$}
      \includegraphics*[width=.45\linewidth]{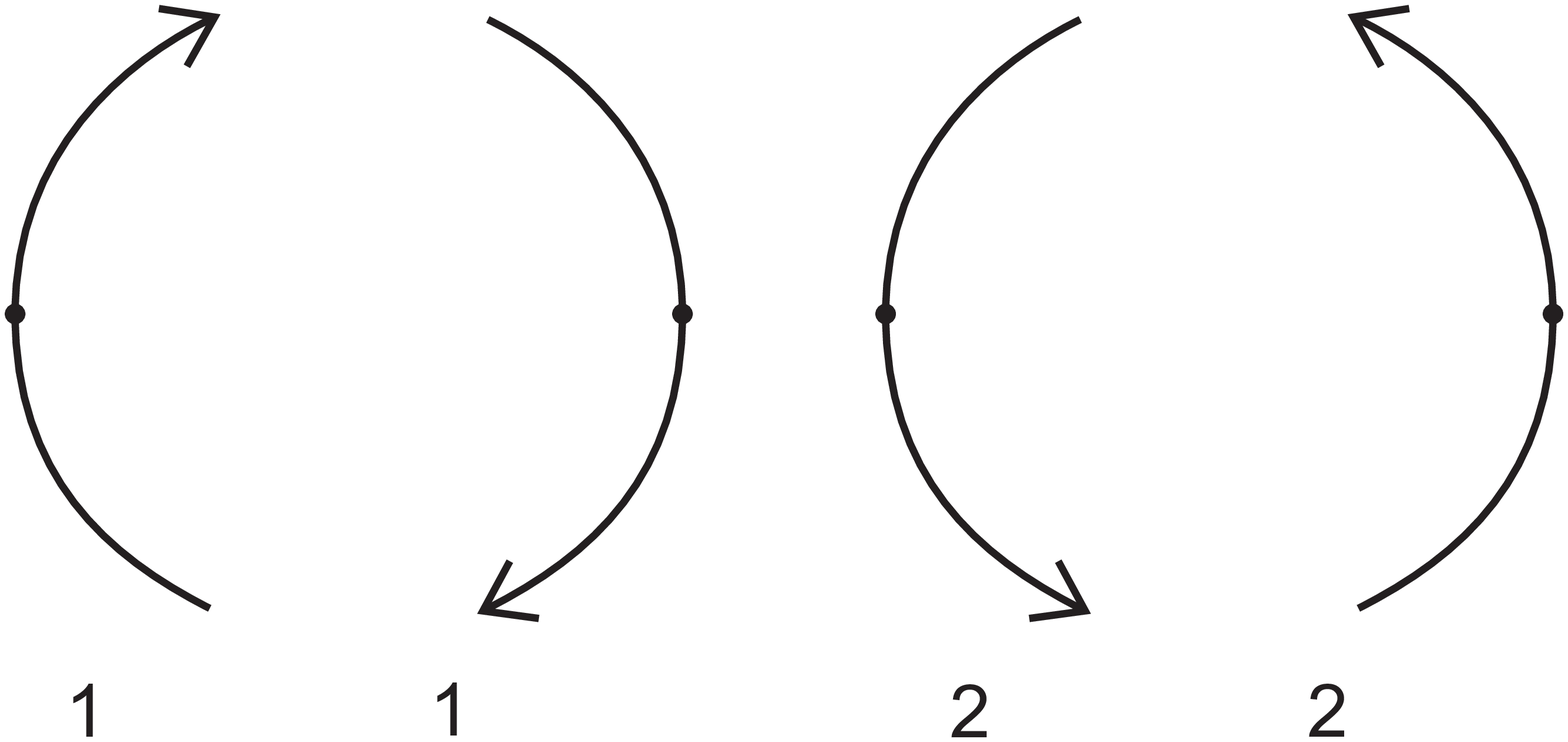}
\caption{Values of $b(t)$ in the projection $p_{xz}$.}\label{fig:last}
\end{figure}



\noindent{\bf The projection $p_{xz}$.}
First we use the projection $p_{xz}: T^3\to T^2$.
By a small Legendrian perturbation, if necessary, we guarantee that
if $2nz(t)\in {\mathbb Z}$ then
the tangent vector $\alpha'(t)$ is not vertical,
i.e., $(x'(t),y'(t))\neq (0,0)$. The only contributions to the rotation number $r(\al)$ occur for points where $\alpha'(t)$ is vertical, and then since $2nz(t)
\notin{\mathbb Z}$ the projection $p_{xz}$
takes $\xi_n$ onto the tangent plane to $T^2$.
Near to where $\alpha'(t)$ is vertical the tangent vector,
which must be non-zero, will be
turning in either the positive direction with respect to the orientation
of the $xz$-plane
and pass the vertical line in the positive direction, and then
we set $b(t)= +1$ (as in the last two cases of Figure
\ref{fig:last}), or in the negative direction (as in the first two cases) where we set $b(t)=-1$.
Let $a(t)=(-1)^{[2nz(t)]}$, where the brackets indicate the
largest integer function, so that $a(t)$ is positive where
the projection $p_{xz}$ of $\xi_n$ onto the tangent $xz$-plane
preserves the orientation and negative where the orientation is reversed, except when $2nz(t)\in {\mathbb Z}$, but we have guaranteed that then
$\alpha'$ will not be vertical.
Thus the contribution of a point where $\alpha'$ is vertical
is half the product of $a(t)$ and $b(t)$.
We have shown the following.

\bpr The rotation invariant $r(\al)$ of a generic oriented knot
$\alpha$ in $T^3$ with respect to the projection
$p_{xz}: T^3\to T^2$ is
$$r(\al)=1/2\sum_{t\in V} a(t)b(t)$$
where $V=\{t\in S^1\ |\ (x'(t),y'(t))=(0,0) \}$
in one circuit of $\alpha$,
provided that $(x'(t),y'(t))\neq (0,0)$ whenever
$2nz(t)\in {\mathbb Z}$. \label{maslovpxy}
\epr


\noindent{\bf The projection $p_{xy}$.} For the projection $p_{xy}:T^3\to T^2$, we must count how many times the tangent field of $\bar{\alpha}=p_{xy}\circ\alpha$
and $Z=\partial/\partial z$ point in the same direction,
and this will happen where $\bar\alpha$ has a cusp
since the projection $\bar\alpha$ will have velocity
$\bar\alpha'(t_0)=0$ at such a point.
Observe that the horizontal normal vector
$Y=(\cos 2\pi nz,\sin 2\pi nz,0)$, which determines the
orientation of the perpendicular contact plane $\xi_n$, projects to
a vector $\bar Y=p_{xy*}(Y)=(\cos 2\pi nz,\sin 2\pi nz)$
perpendicular to the
line tangent to the cusp in the $xy$-plane. The slope of this
line, determined by the value
of $z(t_0)$ at the cusp, may have any value.


\begin{figure}[H]
\centering
\psfrag{a}{$\bar\alpha$}
\psfrag{Y}{$Y$}
      \includegraphics*[width=.26\linewidth]{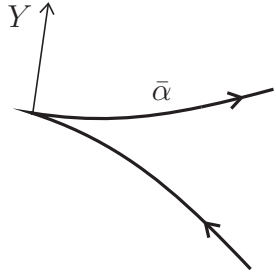}
\caption{A cusp of $p_{xz}\circ\alpha$.}\label{fig:cusp_orientation}
\end{figure}


Consider the orientation of $\alpha$ and the direction of $Y$ in
Figure \ref{fig:cusp_orientation}.
Since the tangent vector $\bar\alpha'(t)$ is turning in the positive
direction in the $xy$-plane, $z'(t_0)>0$ at the cusp.
Before the cusp $\bar\alpha'(t)$ is directed toward the cusp,
and afterwards, it is directed away from the cusp.
Hence it is clear that
$\alpha'(t)$ passes the vertical vector $Z$ in the positive
direction in the contact plane $\xi_n$, so in this case
the contribution of the cusp is $+1$, and we call the cusp {\em positive}. In this case
the projection of $\bar\alpha'(t)$ onto the line through $Y$
has the same direction at $Y$, both before and after the cusp point.
The result is the same
if the diagram in Figure \ref{fig:cusp_orientation}
is rotated in the $xy$-plane.


\begin{figure}[H]
\centering
\psfrag{a}{$\bar\alpha$}
\psfrag{Y}{$Y$}
\psfrag{-}{$-$}
\psfrag{+}{$+$}
      \includegraphics*[width=.8\linewidth]{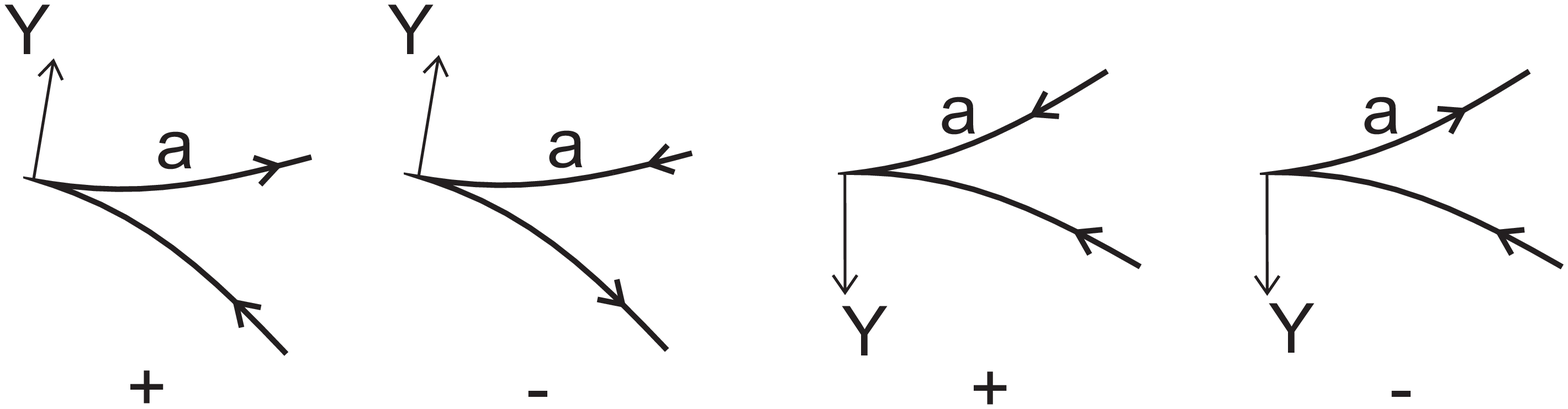}
\caption{Positive and negative cusps for the projection $p_{xz}$.}\label{fig:typesofcusps}
\end{figure}


Now it is clear that if the orientation of $\alpha$ or the direction
of $Y$ is reversed, the sign of the contribution of the cusp
changes. It follows that in all four cases of the orientation
of $\alpha$ and the perpendicular direction of $Y$, the contribution of
the cusp is $+1$ and the cusp is {\em positive} if the projection
of $\bar\alpha'(t)$ onto the line through $Y$ both before
and after the cusp has the same direction as $Y$, and the
cusp is {\em negative}, with contribution $-1$, if the direction
is opposite to $Y$, as shown in Figure \ref{fig:typesofcusps}.

\noindent Thus we have shown the following.

\bpr The rotation number of a generic oriented knot $\alpha$ in
$(T^3,\xi_n)$ with respect to the projection $p_{xy}$ is
$$r(\alpha)=1/2(C_+-C_-)$$
where $C_+$ is the number of positive cusps and $C_-$ is the number of negative cusps of $p_{xy}\circ\alpha$ in one circuit of $\alpha$.
\label{maslovpxz} \epr
This completes the calculation of the rotation invariant
using the projections $p_{xz}$ and $p_{xy}$ as in Theorem
\ref{projectionthm}, so its proof is complete.


\section{Does a Bennequin inequality hold?}\label{s:tb-inequality}
For a null-homologous Legendrian knot $\alpha$ on a tight contact $3$-manifold
$(M,\xi)$ with Seifert surface $\Sigma$, the
Thurston-Bennequin inequality
\begin{equation} tb(\al) + |r(\al)| \leq -\chi(\Sigma) \label{Ben_classical}
\end{equation}
gives an upper bound on $tb(\al)$, provided that
$\chi(\Sigma)\leq 0$ \cite{E,Et}.
It is natural to ask (and we thank the referee for suggesting this)
whether this inequality remains valid for our extension of these
invariants. The following proposition gives an example which shows that the inequality must
be modified. It also motivates a conjecture as to what ought to hold. Recall that
according to Kanda \cite{Ka}, a Legendrian knot $\al$ in $(T^3,\xi_n)$
is quasilinear if $\al$ is isotopic on $T^3$ to a knot which lifts to a
straight line in the universal cover $\tilde T^3 = \Rset^3$.

\bpr For any $(p,q,r)\in\Zset^3\sm \{(0,0,0)\}$ there is
a quasilinear Legendrian knot $\al$ for the tight contact structure $(T^3,\xi_n)$ with a
$(p,q,r)$-periodic lift to $\Rset^3=\tilde T^3$ such that $\al$ satisfies the following equation:
\begin{equation}
tb(\al) + r(\al) = -\chi(\hat\Sigma) + rn. \label{Ben-eq}
\end{equation}
\epr

\begin{proof}
First, we note that for every $n>0$ and $(p,q,r)\in \Zset^3$
there exists a $(p,q,r)$-periodic Legendrian knot $\al$ in $(T^3,\xi_n)$, i.e., such that
$\tilde\al(t+1) = \tilde\al(t) + (p,q,r)$, where $\ta(t) =
(x(t),y(t),z(t))$ is the lift of $\al$ to $\Rset^3$. Furthermore, we may construct $\al$ so that for all
$t\in \Rset$ if $r>0$ (respectively, $r=0$ or $r<0$) we have $z'(t)>0$ (respectively, $z'(t)=0$ or $r'(t)<0$). We construct such a piecewise
linear Legendrian knot and then smooth it out by a small isotopy.
If $r>0$, take $t_0, t_1\in [0,1]$
with $t_0<t_1$ such that $\xi_n(t_0)$ is parallel to the $x$-axis and
$\xi_n(t_1)$ is parallel to the $y$-axis. Then define a piecewise
linear $(p,q,r)$-periodic Legendrian knot by letting $z(t)$ increase
linearly on the intervals $[0,t_0]$, $[t_0+\epsilon,t_1]$, and $[t_1+\epsilon, 1]$ modulo $1$ (for sufficiently small $\epsilon>0$, with $x(t)$ and $y(t)$ both constant
on these intervals,
while on the interval $[t_0,t_0+\epsilon]$ $x(t)$ increases by $p$ and on
$[t_1,t_1+\epsilon]$ $y(t)$ increases by $q$, with $z(t)$ constant.
Next, by a small Legendrian isotopy, deform this PL knot to a
smooth Legendrian knot $\al$ so as to preserve the property that $z'(t)>0$
for every $t$. The case $r<0$ is similar. For the case
$r=0$ we may take $\al$ to be the linear Legendrian knot $\al(t)=(pt,qt,z_0), t\in [0,1],$ where $z_0$ is such that the vector $Y=(p,q,0)\in \xi_n(x,y,z_0)$ for every $x,y\in\Rset$.

For such a Legendrian knot $\al$ with $r>0$ let us calculate the invariants. Since
$z'(t)>0$, the rotation number of the tangent vector $\al'$ with respect
to the constant Legendrian vector field $Z=\partial/\partial z$ in
$\xi_n$ is $r(\al)=0$. Next, take a vector $X\in \Rset^3$ orthogonal
to $(p,q,r)$ and construct a Seifert surface $\hat\Sigma\subset
\hat T^3$ such that along $\hat\al$ $X$ is tangent to $T\hat\Sigma$
and points inwards towards $\hat\Sigma$. Then it follows that $tb(\al)=rn$, since as $z(t)$ increases by $r$ the contact structure $\xi_n$ rotates
exactly $rn$ times. The Seifert surface $\hat\Sigma$ can
be taken to be homeomorphic to $S^1\times [0,\infty)$, so
$\chi(\hat\Sigma) = 0$. Thus equation (\ref{Ben-eq}) holds in
this case. For the case that $z<0$, consider $-\al$, the knot $\al$ with
the reversed orientation, and apply the case $r>0$. The
signs of $r(\al)$ and $r$ are reversed, while $tb(\al)$ and
$\chi(\hat\Sigma)$ continue to vanish, so the same formula holds.
The case $r=0$ is analogous, with $r(\al)=tb(\al)=0$.
\end{proof}
These examples motivate the following conjecture.
\bcj
For any Legendrian knot $\al$ on $(T^3,\xi_n)$ that is
$(p,q,r)$-periodic (in the positive direction of $\al$),
there is a Bennequin inequality
\begin{equation}
tb(\al) + r(\al) \leq -\chi(\hat\Sigma) + rn. \label{Ben-ineq}
\end{equation}
\ecj

We note that if $\al$ is a null-homologous Legendrian knot in
$(T^3,\xi_n)$, then $(p,q,r)=(0,0,0)$ and (\ref{Ben-ineq}) is
equivalent to the classical Bennequin inequality (\ref{Ben_classical}).
Furthermore, since every tight contact structure on $T^3$ is contactomorphic to some $\xi_n$,
the conjecture implies that a similar ineqality should hold for Legendrian knots in any tight contact structure $\xi$ on $T^3$,
provided $Z$ is taken to correspond to $\partial/\partial z$ under
the contactomorphism.









\end{document}